\nonstopmode \numberwithin{equation}{section}
\newtheorem{thm}{Theorem}[section]
\newtheorem{cor}{Corollary}[section]
\newtheorem{lem}{Lemma}[section]
\theoremstyle{definition}
\newtheorem{example}{Example}[section]
\newtheorem{rem}{Remark}[section]
\newcounter{minutes}\setcounter{minutes}{\time}
\newcounter{hours}\setcounter{hours}{\time}
\newcounter {own}
\def\theown {\thesection       .\arabic{own}}
\newcounter{alphabet}
\newcounter{tmp}
\begin{document}
\title{ on analytic functions related to Booth-lemniscate}

\author{Md Firoz Ali}
\address{Md Firoz Ali,
 National Institute Of Technology Durgapur, West Bengal, India}
\email{ali.firoz89@gmail.com, fali.math@nitdgp.ac.in}

\author{Md Nurezzaman }
\address{Md Nurezzaman, National Institute Of Technology Durgapur, West Bengal, India}
\email{nurezzaman94@gmail.com}
\author{Lokenath Thakur}
\address{Lokenath Thakur, National Institute Of Technology Durgapur, West Bengal, India}
\email{lokenaththakur1729@gmail.com}

\subjclass[2010]{Primary 30C45, 30C55}
\keywords{univalent functions, starlike functions, convex functions, logarithmic coefficients, radius of convexity, pre-Schwarzian norm.}

\def\thefootnote{}
\footnotetext{ {\tiny File:~\jobname.tex,
printed: \number\year-\number\month-\number\day,
          \thehours.\ifnum\theminutes<10{0}\fi\theminutes }
} \makeatletter\def\thefootnote{\@arabic\c@footnote}\makeatother

\begin{abstract}
For $0\le \alpha\le 1 $, let $\mathcal{BS}(\alpha)$ be the class of all analytic functions in the unit disk $\mathbb{D}:=\{~z\in\mathbb{C}:|z|<1\}$ with normalization $f(0)=0$ and $f'(0)=1$ 
that satisfy the subordinate relation $zf'(z)/f(z)-1\prec z/(1-\alpha z^2)$ and $\mathcal{BK}(\alpha)$ be the class of all functions $f$ for which $zf' \in \mathcal{BS}(\alpha)$. In this article, we obtain a sharp estimate of the initial Taylor coefficients and logarithmic coefficients for functions in the classes $\mathcal{BS}(\alpha)$ and $\mathcal{BK}(\alpha)$. Further, we obtain the radius of convexity and study the pre-Schwarzian norm for the classes  $\mathcal{BS}(\alpha)$ and $\mathcal{BK}(\alpha)$.

\end{abstract}

\thanks{}

\maketitle
\pagestyle{myheadings}
\markboth{Md Firoz Ali, Md Nurezzaman and Lokenath Thakur}{A study on analytic functions subordinate to Booth-lemniscate}

\section{Introduction}\label{Introduction}
Let $\mathcal{H}$ denote the class of all analytic functions in the unit disk $\mathbb{D}=\{~z\in\mathbb{C}:|z|<1\}$ and $\mathcal{A}$ be the subclass of $\mathcal{H}$ consisting functions $f$ of the form
\begin{align}\label{L-05}
f(z)=z+\sum_{n=2}^\infty a_n z^n.
\end{align}
Further, let $\mathcal{S}$ be the subclass  of $\mathcal{A}$ that contains functions which are  univalent (that is, one-to-one) in $\mathbb{D}$. The univalent function theory  has attracted much more interest for more than a century and is a central area of research in complex analysis. For $f$ and $g$ in $\mathcal{H}$, we say that $f$ is subordinate to $g$, written as $f\prec g$ if there exists a function $\omega$ from $\mathbb{D}$ to itself with $\omega(0)=0$ such that $f(z)=g(\omega(z))$ for $z\in\mathbb{D}$. It is important to note that if $g$ is univalent, then $f\prec g$ if and only if $f(0)=g(0)$ and $f(\mathbb{D})\subset g(\mathbb{D})$. For \( \varphi \in \mathcal{H} \) with $\varphi(0)=1$, let the classes \( \mathcal{S}^*(\varphi) \) and $\mathcal{C}(\varphi)$ be defined by
$$
\mathcal{S}^*(\varphi) = \left\{ f \in \mathcal{A} : \frac{zf'(z)}{f(z)} \prec \varphi(z) \right\},~
\mathrm{and}~~
\mathcal{C}(\varphi) = \left\{ f \in \mathcal{A} : 1 + \frac{zf''(z)}{f'(z)} \prec \varphi(z) \right\},
$$
respectively. The classes $\mathcal{S}^*(\varphi)$ and $\mathcal{C}(\varphi)$ were introduced by Ma and Minda \cite{1992-MaMinda} under the additional hypothesis that \( \varphi \) is univalent with positive real part in \( \mathbb{D} \) such that \( \varphi(\mathbb{D}) \) is symmetric with respect to the real axis and starlike with respect to \( \varphi(0) = 1 \) and \( \varphi'(0) > 0 \). Sometimes, the classes \( \mathcal{S}^*(\varphi) \) and $\mathcal{C}(\varphi)$ are called Ma-Minda classes of starlike and convex functions, respectively. It is important to note that \( f \in \mathcal{C}(\varphi) \) if and only if $zf'(z)\in \mathcal{S}^*(\varphi).$\\

For different choices of the function \( \varphi \), one can get many well known geometric subclasses of \( \mathcal{A} \). For example, if we take \( \varphi(z) = (1+z)/(1-z) \) then the classes \( \mathcal{S}^*(\varphi) \) and $\mathcal{C}(\varphi)$ reduce to the classes \( \mathcal{S}^* \) of starlike functions and \( \mathcal{C} \) of convex functions, respectively. For \( \varphi(z) = (1+(1-2\alpha)z)/(1-z) \), \( 0 \leq \alpha < 1 \), one may get the classes \( \mathcal{S}^*(\alpha) \) of starlike function of order \( \alpha \) and \( \mathcal{C}(\alpha) \) of convex function of order \( \alpha \). For subsequent developments in these classes, see \cite{ 1973-Janowski, 1993-Ronning, 1971-Stankiewicz}. Again, if we take $\varphi(z)=1+F_\alpha(z)$ where $F_\alpha(z)=z/(1-\alpha z^2)$ with $\alpha\in[0,1]$ then the classes \( \mathcal{S}^*(\varphi) \) and \( \mathcal{C}(\varphi) \) reduce to the classes $\mathcal{BS}(\alpha)$ and $\mathcal{BK}(\alpha)$. Thus, a function $f\in \mathcal{A}$ is in $\mathcal{BS}(\alpha)$ if
\begin{align}\label{L-07}
\frac{zf'(z)}{f(z)}-1\prec F_\alpha(z),
\end{align}
and is in the class $\mathcal{BK}(\alpha)$ if 
$$
\frac{zf''(z)}{f'(z)}\prec F_\alpha(z).
$$
The classes $\mathcal{BS}(\alpha)$ and $\mathcal{BK}(\alpha)$ were introduced and studied by Kargar et al.  \cite{2019-Kargar-Ebadian}
and Najmadi et al. \cite{2018-Najmadi}, respectively. In $2013$, Piejko and Sok\'{o}\l~ \cite {2013-Piejko-Sokol} proved that $F_\alpha$ is starlike for $0\le\alpha\le1$ and convex for $0\le\alpha\le3-2\sqrt{2}$.  Note that $F_\alpha(\mathbb{D})= \Omega(\alpha)$, where
\begin{align}\label{L-10}
\Omega(\alpha)=\left\{z:=x+i y\in\mathbb{C}:(x^2+y^2)^2-\frac{x^2}{(1-\alpha)^2}-\frac{y^2}{(1+\alpha)^2} <0\right\},
\end{align}
for $0\le\alpha<1$ and
$$
\Omega(1)=\left\{z:=x+i y\in\mathbb{C}:z\in\mathbb{C}\setminus(-\infty,-\frac{i}{2}]\cup [\frac{i}{2},\infty)\right\}.
$$

\begin{figure}[b]
\subfloat[$\Omega(1/4)$]{\includegraphics[height= 6.5cm, width=4.5cm]{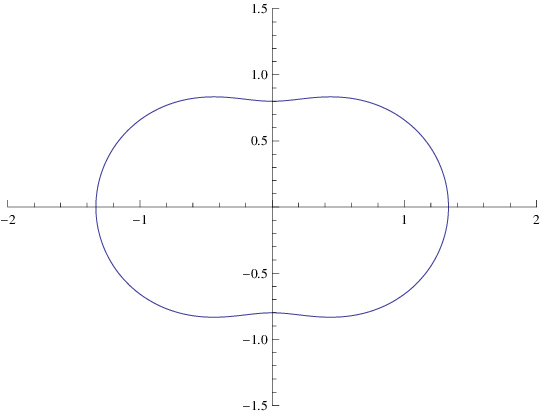}}
\hspace{0.4cm} 
\subfloat[$\Omega(1/2)$]{\includegraphics[height= 6.5cm, width=4.5cm]{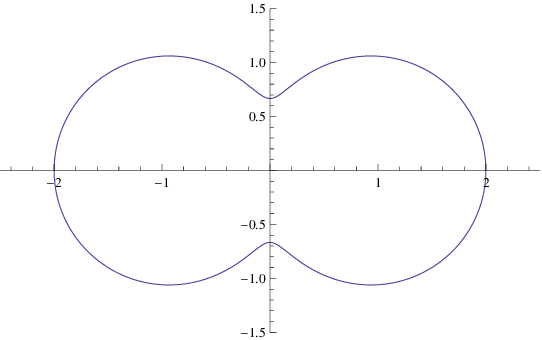}}
\hspace{0.4cm} 
\subfloat[$\Omega(3/4)$]
	{\includegraphics[height= 6.5cm, width=4.5cm]{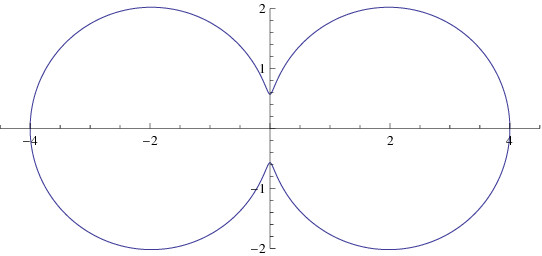}}
	\caption{$\Omega(\alpha)$ for certain values of $\alpha$}
	\label{L-fig-1}
\end{figure}
The images of $\Omega(\alpha)$ for certain values of $\alpha\in[0,1]$ is shown in {\sc Figure \ref{L-fig-1}}. Here we note that a curve $\Gamma$ is called the Booth-lemniscate (named after J. Booth \cite{1873-Booth}) if it is of the form
$$
\Gamma:=\{(x,y)\in\mathbb{R}\setminus\{(0,0)\}:(x^2 +y^2)^2-(n^4+2m^2)x^2-(n^4-2m^2)y^2 =0\}.
$$ 
The curve $\Gamma$ is called elliptic if $n^4 > 2m^2$ while, for $n^4 < 2m^2$, it becomes hyperbolic. One can easily verify that the domain $\Omega(\alpha)$ defined by \eqref{L-10} is bounded by a Booth lemniscate of elliptic type. Let us consider the functions $f_n \in \mathcal{A},~n\in\mathbb{N}$ defined by 
\begin{align}\label{L-02}
f_n(z)=z\left(\frac{1+\sqrt{\alpha}z^n}{1-\sqrt{\alpha}z^n}\right)^{\frac{1}{2n\sqrt{\alpha}}}, ~~0\leq \alpha \leq 1.
\end{align}
Now taking logarithimic derivative on both sides of \eqref{L-02}, one can obtain
$$
\frac{zf_n'(z)}{f_n(z)}-1=\frac{z^n}{1-\alpha z^{2n}} \prec \frac{z}{1-\alpha z^2}=F_\alpha(z). 
$$
This shows that the functions $f_n$ are in the class $\mathcal{BS}(\alpha)$ for each $n\in \mathbb{N}$. An important example of function in the class $\mathcal{BK}(\alpha)$ is given by
$$
g(z)=\int_0^z\exp\left(\int_0^u \frac{dt}{1-\alpha t^2} \right)du=z+\frac{z^2}{2!}+\frac{z^3}{3!}+\frac{1+2\alpha}{4!}z^4+\cdots,\quad z\in\mathbb{D}.
$$

In $1916$, Bieberbach \cite{1916-Bieberbach} proved that if $f\in\mathcal{S}$ is of the form (\ref{L-05}) then $|a_2|\le 2$, and the equality holds if and only if $f$ is a rotation of the Koebe function $k(z)=z/(1-z)^2,~z\in\mathbb{D}$. In the same paper, Bieberbach conjectured that $|a_n|\le n$ for $n\ge3$ when $f\in\mathcal{S}$ and equality holds if and only if $f$ is a rotation of the Koebe function. The logarithmic coefficient $\gamma_n$ of a function $f\in \mathcal{S}$  of the form \eqref{L-05} is defined by
\begin{equation}\label{L-15}
F_f(z)=\log\frac{f(z)}{z}=2\sum\limits_{n=1}^{\infty}\gamma_nz^n.
\end{equation}
The significance of logarithmic coefficients in addressing the coefficient problem for functions in $\mathcal{S}$ was first highlighted by Bazilevi\'c \cite{1965-Bazilevich}. Indeed, Bazilevi\'c \cite{1967-Bazilevich} analyzed the series 
$\sum_{n=1}^{\infty} n|\gamma_n|^2r^{2n}$, which, when multiplied by $\pi$, corresponds to the area of the image of the disk $|z|<r<1$ under the mapping $\frac{1}{2}F_f(z)$ for $f\in\mathcal{S}$. A pivotal result in this domain is the de Branges inequality (resolving the earlier Milin conjecture). For $f\in\mathcal{S}$, it asserts that
$$
\sum_{k=1}^n (n-k+1)|\gamma_n|^2\le\sum_{k=1}^n \frac{n-k+1}{k}, \quad n=1,2,\cdots,
$$
with equality holding precisely for Koebe functions or one of its rotations (see \cite{1985-Branges}). This inequality was instrumental in de Branges proof of the long-standing Bieberbach conjecture. Furthermore, it inspired subsequent inequalities involving logarithmic coefficients, including the bound (see \cite{1979-Duren})
$$
\sum_{k=1}^\infty|\gamma_n|^2\le\sum_{k=1}^\infty \frac{1}{k^2}=\frac{\pi^2}{6}.
$$
While average results for logarithmic coefficients $\gamma_n$ have been studied extensively (see \cite{1979-Duren,1983-Duren,2007-Roth}), precise upper bounds on $|\gamma_n|$ for functions in the class $\mathcal{S}$ remain scarce. The Koebe function $k(z) = z/(1-z)^2$, whose logarithmic coefficients are given by $\gamma_n = 1/n$, typically serves as the extremal function for many problems in $\mathcal{S}$. This might suggest the conjecture that $|\gamma_n| \leq 1/n$ holds generally for functions in $\mathcal{S}$. However, this conjecture fails dramatically not just in precise values but even in terms of asymptotic behavior. 
However, one of the reasons the logarithmic coefficients have received more attention is because although the sharp bound  
\begin{align*}
|\gamma_1|\le 1\quad \mathrm{and}\quad |\gamma_2|\le \frac{1}{2}\left(1+2 e^{-2}\right)
\end{align*}
for the class $\mathcal{S}$ is known,
the problem of determining the sharp bounds of $|\gamma_n|$ for $n\ge3$ in the class $\mathcal{S}$ still remains unsolved.\\


For a locally univalent function $f$, the pre-Schwarzian derivative is defined by
$$P_f(z)=\frac{f''(z)}{f'(z)},$$
and that of the pre-Schwarzian norm (the hyperbolic sup-norm) is defined by
\begin{equation*}\label{P-05}
||P_f||=\sup\limits_{z\in\mathbb{D}}(1-|z|^2)|P_f(z)|. 
\end{equation*}
This norm has a significant meaning in the theory of Teichm\"{u}ller spaces. For a univalent function $f$, it is well known that $||P_f||\leq 6$ and the estimate is sharp. On the other hand, if $||P_f||\leq 1$, then $f$ is univalent in $\mathbb{D}$ (see \cite{1972-Becker, 1984-Becker-Pommerenke}). In 1976, Yamashita \cite{1976-Yamashita} proved that $||P_f ||$ is finite if and only if $f$ is uniformly locally univalent, i.e., a function defined on $\mathbb{D}$ such that for every $z_0 \in \mathbb{D}$, there exists a constant $\delta > 0$ for which the function $f$ is univalent on the hyperbolic disk, 
\(
D_h(z_0, \delta) = \{ z \in \mathbb{D} : \rho_D(z, z_0) < \delta \},
\)
where $\rho_D$ denotes the hyperbolic metric on $\mathbb{D}$. Moreover, if \(||P_f|| < 2\), then \(f\) is bounded in \(\mathbb{D}\) (see \cite{2002-Kim-Sugawa}). In univalent function theory, several researchers obtained sharp estimates of the pre-Schwarzian norm for different subclasses of analytic and univalent functions. Sugawa \cite{1998-Sugawa} obtained sharp estimate of the pre-Schwarzian norm for functions in the class \(\mathcal{S}^*(\varphi)\) with \(\varphi = ((1+z)/(1-z))^{\alpha}\) of strongly starlike functions of order \(\alpha\), \(0 < \alpha \leq 1\). Yamashita \cite{1999-Yamashita} studied the classes \(\mathcal{S}^*(\alpha)\) and \(\mathcal{C}(\alpha)\), \(0 \leq \alpha < 1\), and proved the sharp estimates \(||P_f|| \leq 6 - 4\alpha\) for \(f \in \mathcal{S}^*(\alpha)\) and \(||P_f|| \leq 4(1-\alpha)\) for \(f \in \mathcal{C}(\alpha)\). For more details on the sharp estimates of the pre- Schwarzian norm we refer to \cite{2014-Aghalary, 2023-Ali, 2006-Kim-Sugawa, 2000-Okuyama, 2010-Ponnusamy}.\\

A number $s\in[0,1]$ is called the radius of convexity of a certain subclass $\mathcal{F}$ of $\mathcal{A}$ if $s$ is the largest number such that 
$$
\operatorname{Re}\left(1+\frac{zf''(z)}{f'(z)}\right)>0 \quad \text{for} ~~ |z|<s
$$
and for all $f$ in $\mathcal{F}$. The radius of convexity for the class $\mathcal{S}$ is $r=2-\sqrt{3}$ dating back to the early 20th century, with the foundational work of Nehari \cite{1949-Nehari}. MacGregor \cite{1970-Macgregor} obtained the radius of convexity for the class $\mathcal{S}^*(1/2)$. Todorov \cite{2004-Todorov} obtained the radius of convexity for the class $\mathcal{P}'$ which consists function $f\in\mathcal{A}$ of the form \eqref{L-05} such that $\operatorname{Re}f'(z)>0$. For more details on radius of convexity see \cite{1983-Duren, 1983-Goodman}.\\

In the next section, we study the Taylor coefficient estimate and logarithmic coefficients for functions in the classes $\mathcal{BS}(\alpha)$ and $\mathcal{BK}(\alpha)$. In Section \ref{Radius of convexity}, we study the radius of convexity for classes $\mathcal{BS}(\alpha)$ and $\mathcal{BK}(\alpha)$ and in Section \ref{The pre-Schwarzian norm}, we study the pre-Schwarzian norm for these classes.

\section{coefficient estimates}\label{coefficient estimates}
Let $\mathcal{B}$ be the subclass of $\mathcal{H}$ of function $\omega \in \mathcal{H}$ such that $|\omega(z)|<1$ for all $z\in\mathbb{D}$ and $\mathcal{B}_0$ be the subclass of $\mathcal{B}$ with $\omega(0)=0$. Thus, any $\omega\in\mathcal{B}_0$ has the following Taylor series representation
\begin{align}\label{L-01}
\omega(z)=\sum_{n=1}^\infty c_n z^n.
\end{align}


In $1981$, Prokhorov et al. \cite{1981-Prokhorov} obtained a coefficient inequality for functions in $\mathcal{B}_0$. Here, we present a part of it for our convenience.
\begin{lem}\label{L-52}
Let $f\in\mathcal{B}_0$ be of the form \eqref{L-01}. Then for real $\mu$ and $\nu$, we have
\begin{align*}
|c_3+\mu c_1c_2+\nu c_1^3|\le
\begin{cases}
1,\quad &(\mu, \nu)\in\Omega_1\cup\Omega_2,\\
|\nu|,\quad &(\mu, \nu)\in\Omega_3,
\end{cases}
\end{align*}
where 
\begin{align*}
\Omega_1=&\left\{(\mu, \nu):|\mu|\le\frac{1}{2},\quad -1\le\nu\le1\right\},\\
\Omega_2=&\left\{(\mu, \nu):\frac{1}{2}\le|\mu|\le2,\quad \frac{4}{27}(|\mu|+1)^3-(|\mu|+1)\le\nu\le1\right\},\\
\Omega_3=&\left\{(\mu, \nu):|\mu|\le2,\quad \nu\ge1\right\}.
\end{align*}
Moreover, all inequalities are sharp.
\end{lem}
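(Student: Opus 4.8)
The plan is to pass from Schwarz functions to their Schur (continued-fraction) parameters, which converts the estimate into an explicit extremal problem over the unit square, and then to resolve that problem by a case analysis matching $\Omega_1,\Omega_2,\Omega_3$.

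First, two reductions. Replacing $\omega(z)$ by $-\omega(-z)$ keeps $\omega\in\mathcal B_0$ and sends $(c_1,c_2,c_3)$ to $(c_1,-c_2,c_3)$, hence $\mu\mapsto-\mu$; so it suffices to treat $\mu\ge0$, and the general real case then follows with $|\mu|$ in place of $\mu$. Next recall the Schur algorithm: put $\omega_1(z)=\omega(z)/z$, and as long as $|\omega_k(0)|<1$ set $\omega_{k+1}(z)=\frac1z\cdot\frac{\omega_k(z)-\omega_k(0)}{1-\overline{\omega_k(0)}\,\omega_k(z)}$; each $\omega_k$ maps $\mathbb D$ into $\overline{\mathbb D}$, and with $d_k=\omega_k(0)$ one has $\omega_k(z)=\frac{d_k+z\,\omega_{k+1}(z)}{1+\bar d_k z\,\omega_{k+1}(z)}$. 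Expanding this relation twice gives
\[
c_1=d_1,\quad c_2=(1-|d_1|^2)d_2,\quad c_3=(1-|d_1|^2)\bigl[(1-|d_2|^2)d_3-\bar d_1 d_2^2\bigr],
\]
with $d_1,d_2,d_3\in\overline{\mathbb D}$ arbitrary (the degenerate case $|\omega_k(0)|=1$, where $\omega$ is a low-degree Blaschke product, is handled by continuity). Substituting into $c_3+\mu c_1c_2+\nu c_1^3$ and using $|d_3|\le1$ with the triangle inequality,
\[
|c_3+\mu c_1c_2+\nu c_1^3|\le(1-|d_1|^2)(1-|d_2|^2)+\bigl|\,\nu d_1^3+\mu(1-|d_1|^2)d_1d_2-(1-|d_1|^2)\bar d_1 d_2^2\,\bigr|.
\]

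Second, optimize over the arguments. Writing $d_1=xe^{i\alpha}$, $d_2=ye^{i\beta}$ with $x,y\in[0,1]$, factoring out a common unimodular factor and putting $t=\cos\delta$ for the residual phase $\delta$, the right-hand side becomes an elementary function of $(x,y,t)\in[0,1]^2\times[-1,1]$ that is quadratic in $t$; maximizing in $t$ (vertex versus the endpoints $t=\pm1$) yields a purely real function $\Phi_{\mu,\nu}(x,y)$ on $[0,1]^2$. For instance, when $\nu\le0$ this collapses to $\Phi_{\mu,\nu}(x,y)=(1-x^2)(1-y^2)+|\nu x^3-xy^2(1-x^2)|+\mu xy(1-x^2)$, with an analogous explicit expression for $\nu>0$, and the task is reduced to computing $\max_{x,y\in[0,1]}\Phi_{\mu,\nu}(x,y)$ as a function of $(\mu,\nu)$.

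Third, and this is where essentially all the work lies, one determines this maximum by examining the interior stationary points of $\Phi_{\mu,\nu}$ together with the four edges $x\in\{0,1\}$ and $y\in\{0,1\}$. On $x=1$ only the term $\nu x^3$ survives, giving the value $|\nu|$, which dominates exactly for $\nu\ge1$; this accounts for region $\Omega_3$. Off that region one checks $\Phi_{\mu,\nu}\le1$ precisely on $\Omega_1\cup\Omega_2$, the curve $\nu=\frac4{27}(|\mu|+1)^3-(|\mu|+1)$ arising as the locus at which an interior critical point first attains the value $1$. Sharpness then follows by unwinding the optimization: equality for $(\mu,\nu)\in\Omega_3$ is realized by $\omega(z)=e^{i\theta}z$, and for $(\mu,\nu)\in\Omega_1\cup\Omega_2$ by the Schwarz function with the extremal Schur parameters read off above, e.g.\ $\omega(z)=z^3$ on part of the region and $\omega(z)=z\,\frac{z+a}{1+az}$ on the rest. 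I expect the phase bookkeeping in the second step and the edge-versus-interior case analysis in the third step to be the only genuine obstacles; the structural reduction via Schur parameters is short and mechanical.
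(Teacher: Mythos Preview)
The paper does not prove this lemma at all: it is quoted, as a portion of a known result, from Prokhorov and Szynal \cite{1981-Prokhorov}, and is used as a black box in the proofs of Theorem~\ref{L-100} and the logarithmic-coefficient theorem. So there is nothing in the paper to compare your argument against.

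That said, your outline follows the same structural idea as the original Prokhorov--Szynal proof: pass to Schur parameters to get the representation $c_1=d_1$, $c_2=(1-|d_1|^2)d_2$, $c_3=(1-|d_1|^2)\bigl[(1-|d_2|^2)d_3-\bar d_1 d_2^2\bigr]$, reduce to a real optimization problem on $[0,1]^2$ after handling phases, and then carry out a boundary-versus-interior case analysis. As written, though, your proposal is a sketch rather than a proof: the sentence ``one checks $\Phi_{\mu,\nu}\le1$ precisely on $\Omega_1\cup\Omega_2$, the curve $\nu=\frac4{27}(|\mu|+1)^3-(|\mu|+1)$ arising as the locus at which an interior critical point first attains the value~$1$'' is exactly where the substantive computation sits, and you have not carried it out. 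If you intend to include a self-contained proof here, that step needs to be done in full (and the phase optimization in your second step should also be written out explicitly, since the claimed reduction to a single cosine variable is not entirely obvious); otherwise it is cleaner to do what the paper does and simply cite \cite{1981-Prokhorov}.
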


The next lemma from Keogh et al. \cite{1969-Keogh-Merkes} for the functions in $\mathcal{B}_0$ will be helpful to prove our results.
\begin{lem}\label{L-53}
Let $f\in\mathcal{B}_0$ be of the form \eqref{L-01}. Then for any $\mu \in\mathbb{C}$,
$$
|c_2-\mu c_1^2|\le\max\left\{1,|\mu|\right\}
$$
and the estimate is sharp.
\end{lem}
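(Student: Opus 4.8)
The plan is to deduce the bound from the Schwarz--Pick lemma applied to the auxiliary function $p(z):=f(z)/z$, which is the standard route for Fekete--Szeg\H{o}--type functionals over $\mathcal{B}_0$. The first step is the observation that if $f\in\mathcal{B}_0$ has the expansion \eqref{L-01}, then by Schwarz's lemma $|f(z)/z|\le 1$ on $\mathbb{D}$, so $p(z)=c_1+c_2z+c_3z^2+\cdots$ is a self-map of $\mathbb{D}$ with $p(0)=c_1$ and $p'(0)=c_2$; in particular $|c_1|\le 1$.

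The second step produces the crucial estimate $|c_2|\le 1-|c_1|^2$. If $|c_1|=1$ this is vacuous, but then Schwarz's lemma forces $f(z)=c_1z$, hence $c_2=0$, and the asserted inequality is trivial; so I would assume $|c_1|<1$ and pass to the disk automorphism composed with $p$, namely
$$q(z):=\frac{p(z)-c_1}{1-\overline{c_1}\,p(z)}\in\mathcal{B}_0.$$
A direct differentiation gives $q'(0)=c_2/(1-|c_1|^2)$, and Schwarz's lemma $|q'(0)|\le 1$ then yields $|c_2|\le 1-|c_1|^2$.

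The third step is elementary bookkeeping: $|c_2-\mu c_1^2|\le |c_2|+|\mu|\,|c_1|^2\le 1+(|\mu|-1)|c_1|^2$, and splitting into the cases $|\mu|\le 1$ (the coefficient of $|c_1|^2$ is nonpositive, so the bound is $\le 1$) and $|\mu|\ge 1$ (bound $\le 1+(|\mu|-1)=|\mu|$, using $|c_1|\le 1$) gives $|c_2-\mu c_1^2|\le\max\{1,|\mu|\}$ in all cases. For sharpness I would exhibit the extremal examples $f(z)=z^2$ (with $c_1=0$, $c_2=1$), which gives equality whenever $|\mu|\le 1$, and $f(z)=z$ (with $c_1=1$, $c_2=0$), which gives equality whenever $|\mu|\ge 1$.

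There is no real obstacle here; the only point that needs care is the passage to $q$, which requires $|c_1|<1$, so the boundary case must be peeled off first, and one should note that $q$ genuinely belongs to $\mathcal{B}_0$ (it fixes the origin since $p(0)=c_1$, and it maps $\mathbb{D}$ into $\mathbb{D}$ as the composition of $p$ with a M\"obius automorphism of $\mathbb{D}$).
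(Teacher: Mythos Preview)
Your argument is correct in every detail: the reduction to $p(z)=f(z)/z\in\mathcal{B}$, the M\"obius renormalisation $q=(p-c_1)/(1-\overline{c_1}\,p)\in\mathcal{B}_0$ yielding $|c_2|\le 1-|c_1|^2$, the split into $|\mu|\le 1$ and $|\mu|\ge 1$, and the extremal functions $f(z)=z^2$ and $f(z)=z$ all work exactly as you describe.

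There is nothing to compare against, however, because the paper does not prove this lemma at all: it is quoted verbatim as a known result from Keogh and Merkes \cite{1969-Keogh-Merkes} and used as a black box in the proof of Theorem~\ref{L-100}. Your proof is essentially the classical one underlying that reference, so it is entirely in the spirit of what the paper is invoking.
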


In our first theorem, we will find the sharp estimate of some initial Taylor coefficients for functions in the class $\mathcal{BS(\alpha)}$.

\begin{thm}\label{L-100}
Let $f\in \mathcal{BS(\alpha)}$, $0\le \alpha\le1$, be of the form \eqref{L-05}. Then
\begin{align*}
|a_2|\le1,\quad |a_3|\le\frac{1}{2!},\\
 |a_4|\le
\begin{cases}
\frac{1}{3},\quad &0\le\alpha\le\frac{1}{2},\\[3mm]
\frac{1+2\alpha}{3!},\quad &\frac{1}{2}\le\alpha\le1.
\end{cases}
\end{align*}
All inequalities are sharp.
\end{thm}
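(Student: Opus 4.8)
The plan is to start from the subordination relation $zf'(z)/f(z)-1\prec F_\alpha(z)$, write this as $zf'(z)/f(z)-1=F_\alpha(\omega(z))$ for some $\omega\in\mathcal{B}_0$ with Taylor expansion $\omega(z)=c_1z+c_2z^2+c_3z^3+\cdots$, and then expand both sides as power series. On the left, using $f(z)=z+a_2z^2+a_3z^3+a_4z^4+\cdots$, one computes $zf'(z)/f(z)-1 = a_2 z + (2a_3-a_2^2)z^2 + (3a_4 - 3a_2a_3 + a_2^3)z^3 + \cdots$. On the right, since $F_\alpha(w)=w+\alpha w^3 + \cdots$, we get $F_\alpha(\omega(z)) = c_1 z + c_2 z^2 + (c_3 + \alpha c_1^3)z^3 + \cdots$. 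Matching coefficients yields $a_2 = c_1$, $2a_3 - a_2^2 = c_2$, and $3a_4 - 3a_2a_3 + a_2^3 = c_3 + \alpha c_1^3$.

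From these identities I would solve for the Taylor coefficients in terms of the $c_n$: $a_2 = c_1$, $a_3 = \tfrac12(c_2 + c_1^2)$, and after substituting back, $a_4 = \tfrac13\big(c_3 + \alpha c_1^3 + 3a_2a_3 - a_2^3\big) = \tfrac13\big(c_3 + \tfrac32 c_1 c_2 + (\alpha + \tfrac12)c_1^3\big)$ (the precise numerical coefficients to be verified in the write-up). The bound $|a_2|\le 1$ is immediate from $|c_1|\le 1$. For $|a_3|$, I would apply Lemma \ref{L-53} with $\mu = -1$: $|a_3| = \tfrac12|c_2 - (-1)c_1^2| \le \tfrac12\max\{1,1\} = \tfrac12$. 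For $|a_4|$, the expression $|a_4| = \tfrac13\big|c_3 + \mu c_1 c_2 + \nu c_1^3\big|$ with $\mu = \tfrac32$ and $\nu = \alpha + \tfrac12$ is exactly the shape handled by Lemma \ref{L-52}; I would check which region $\Omega_1,\Omega_2,\Omega_3$ the pair $(\mu,\nu)=(\tfrac32,\alpha+\tfrac12)$ falls in as $\alpha$ ranges over $[0,1]$. Since $|\mu|=\tfrac32\in[\tfrac12,2]$, the relevant boundary curve is $\nu = \tfrac{4}{27}(|\mu|+1)^3 - (|\mu|+1) = \tfrac{4}{27}\cdot\tfrac{125}{8} - \tfrac52 = \tfrac{125}{54} - \tfrac{135}{54} = -\tfrac{10}{54}$, which is well below $\nu = \alpha+\tfrac12 \ge \tfrac12$; hence for $0\le\alpha\le\tfrac12$ we have $\nu = \alpha+\tfrac12 \le 1$, so $(\mu,\nu)\in\Omega_2$ and $|a_4|\le\tfrac13$, while for $\tfrac12\le\alpha\le1$ we have $\nu\ge 1$, so $(\mu,\nu)\in\Omega_3$ and $|a_4|\le\tfrac13|\nu| = \tfrac13(\alpha+\tfrac12) = \tfrac{1+2\alpha}{6} = \tfrac{1+2\alpha}{3!}$.

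For sharpness, I would exhibit explicit extremal functions. The estimates $|a_2|\le 1$, $|a_3|\le\tfrac12$ and $|a_4|\le\tfrac13$ (the latter on $0\le\alpha\le\tfrac12$) should be attained by the function $f_1$ from \eqref{L-02} with $n=1$ (or a suitable rotation), for which $zf_1'(z)/f_1(z)-1 = F_\alpha(z)$ corresponds to $\omega(z)=z$; one checks its Taylor coefficients directly. For the range $\tfrac12\le\alpha\le1$, sharpness of $|a_4|\le\tfrac{1+2\alpha}{3!}$ should come from a function associated with $\omega(z) = z^{?}$ or a finite Blaschke-type $\omega$ attaining equality in Lemma \ref{L-52} on $\Omega_3$; I would identify the correct $\omega$ from the equality case in Prokhorov's lemma and integrate up to recover $f$. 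The main obstacle I anticipate is twofold: first, getting the coefficient identities and the constants $\mu,\nu$ exactly right (a routine but error-prone expansion), and second, pinning down the extremal $\omega$ in the $\Omega_3$ regime and verifying that the resulting $f$ genuinely lies in $\mathcal{BS}(\alpha)$ and realizes the bound — the equality analysis in the Prokhorov–Keogh lemmas is where care is needed.
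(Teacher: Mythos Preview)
Your approach is essentially identical to the paper's: derive $a_2=c_1$, $a_3=\tfrac12(c_2+c_1^2)$, $a_4=\tfrac13\big(c_3+\tfrac32 c_1c_2+\tfrac{1+2\alpha}{2}c_1^3\big)$, then apply Lemma~\ref{L-53} for $|a_3|$ and Lemma~\ref{L-52} with $(\mu,\nu)=(\tfrac32,\tfrac{1+2\alpha}{2})$ for $|a_4|$, splitting into $\Omega_2$ and $\Omega_3$ at $\alpha=\tfrac12$. This part is fine.

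The gap is in your sharpness discussion, where you have the two extremal functions for $|a_4|$ interchanged. Taking $\omega(z)=z$ (i.e.\ $f_1$) gives $c_1=1$, $c_2=c_3=0$, hence $a_4=\tfrac{1+2\alpha}{6}$; this realises the bound $\tfrac{1+2\alpha}{3!}$ in the range $\tfrac12\le\alpha\le1$, not the bound $\tfrac13$. For $0\le\alpha<\tfrac12$ the function $f_1$ gives $|a_4|=\tfrac{1+2\alpha}{6}<\tfrac13$, so it is \emph{not} extremal there. The correct extremal for $|a_4|\le\tfrac13$ is $f_3$ (i.e.\ $\omega(z)=z^3$, so $c_1=c_2=0$, $c_3=1$), which yields $a_4=\tfrac13$ for every $\alpha$. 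Thus the ``finite Blaschke-type $\omega$'' you anticipated needing in the $\Omega_3$ regime is simply $\omega(z)=z$, and the monomial $\omega(z)=z^3$ handles the $\Omega_2$ regime.
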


\begin{proof}
If $f\in \mathcal{BS(\alpha)}$ is of the form \eqref{L-05} then
$$
\frac{zf'(z)}{f(z)}-1\prec\frac{z}{1-\alpha z^2}.
$$
Thus, there exist a function $\omega\in\mathcal{B}_0$ of the form \eqref{L-01} such that
$$
\frac{zf'(z)}{f(z)}-1=\frac{\omega(z)}{1-\alpha \omega^2(z)}.
$$
Now equating the coefficients of $z^2,z^3$ and $z^4$, we have
\begin{align}\label{L-85}
a_2=c_1,\quad a_3=\frac{1}{2}(c_1^2+c_2),
\end{align}
\begin{align}\label{L-90}
a_4=\frac{1}{6}\left((1+2\alpha)c_1^3+3c_1c_2+2c_3\right).
\end{align}
From \eqref{L-85} and using Lemma \ref{L-53}, we have
$$
|a_2|=|c_1|\le1 \quad \mathrm{and} \quad |a_3|=\frac{1}{2}|c_1^2+c_2|\le\frac{1}{2!}.
$$
Both the inequalities are sharp for the function $f_1\in \mathcal{BS(\alpha)}$ defined by \eqref{L-02}.
Again, from \eqref{L-90} we have
$$
|a_4|=\frac{1}{3}\left|c_3+\frac{3}{2}c_1c_2+\frac{1+2\alpha}{2}c_1^3\right|=\frac{1}{3}\left|c_3+\mu c_1c_2+\nu c_1^3\right|,
$$
where $\mu=3/2$ and $\nu=\frac{1+2\alpha}{2}$. Now we consider the following two cases.\\

\textbf{Case-1:} Let $0\le\alpha\le\frac{1}{2}$. 
Using the notation of Lemma \ref{L-52}, we have $\mu=3/2\le2$ and $\nu=(1+2\alpha)/2\le1$ that show that $(\mu,\nu)\in \Omega_2 $. Therefore, by Lemma \ref{L-52},
$$
|a_4|\le\frac{1}{3}.
$$
The inequality is sharp for the function $f_3\in \mathcal{BS(\alpha)}$ defined by \eqref{L-02}.\\

\textbf{Case-2:} Let $\frac{1}{2}<\alpha\le1$. 
Using the notation of Lemma \ref{L-52}, we have $\mu=3/2\le2$ and $\nu=(1+2\alpha)/2\ge1$ which show that $(\mu,\nu)\in \Omega_3 $. Therefore, by Lemma \ref{L-52},
$$
|a_4|\le\frac{1+2\alpha}{3!}
$$
and the inequality is sharp for the function $f_1\in \mathcal{BS(\alpha)}$ defined by \eqref{L-02}.
\end{proof}

The next result gives the sharp estimate of $|a_2|$, $|a_3|$ and $|a_4|$ for functions in the class $\mathcal{BK(\alpha)}$ and it immediately follows from Theorem \ref{L-100}. Here we note that the estimate of $|a_2|$ and $|a_3|$ is obtained by Najmadi et al. \cite{2018-Najmadi}.

\begin{cor}\label{L-101}
Let $f(z)\in \mathcal{BK}(\alpha)$, $0\le \alpha\le1$, be of the form \eqref{L-05}. Then
\begin{align*}
|a_2|\le \frac{1}{2}, \quad |a_3|\le \frac{1}{6},\\ |a_4|\le \begin{cases}
\frac{1}{12},\quad &0\le\alpha\le\frac{1}{2},\\[3mm]
\frac{1+2\alpha}{24},\quad &\frac{1}{2}\le\alpha\le1.
\end{cases}
\end{align*}
The estimates $|a_2|\le\frac{1}{2}$,  $|a_3|\le\frac{1}{6}$ and  $|a_4|\le\frac{1+2\alpha}{24}$ are sharp for the function $g_1\in \mathcal{BK(\alpha)}$ defined by
$$
g_1(z)=\int_0^z\exp\left(\int_0^u \frac{dt}{1-\alpha t^2} \right)du=z+\frac{z^2}{2!}+\frac{z^3}{3!}+\frac{1+2\alpha}{4!}+\cdots.
$$
The estimate $|a_4|\le\frac{1}{12}$ is sharp for the function $g_2\in \mathcal{BK(\alpha)}$ defined by
$$
g_2(z)=\int_0^z\left(\frac{1+\sqrt{\alpha}t^3}{1-\sqrt{\alpha}t^3}\right)^{\frac{1}{6\sqrt{\alpha}}}dt=z+\frac{1}{12}z^4+\cdots.
$$
\end{cor}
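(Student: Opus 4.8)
The plan is to derive Corollary \ref{L-101} directly from Theorem \ref{L-100} using the relation between $\mathcal{BK}(\alpha)$ and $\mathcal{BS}(\alpha)$. If $f(z)=z+\sum_{n\ge2}a_nz^n\in\mathcal{BK}(\alpha)$, then by definition $g(z):=zf'(z)\in\mathcal{BS}(\alpha)$. Writing $g(z)=z+\sum_{n\ge2}b_nz^n$, comparison of coefficients gives $b_n=na_n$ for all $n\ge2$; in particular $b_2=2a_2$, $b_3=3a_3$ and $b_4=4a_4$. Applying Theorem \ref{L-100} to $g$ yields $|b_2|\le1$, $|b_3|\le1/2!$ and $|b_4|\le1/3$ for $0\le\alpha\le1/2$ while $|b_4|\le(1+2\alpha)/3!$ for $1/2\le\alpha\le1$. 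Dividing by the appropriate integer gives $|a_2|\le1/2$, $|a_3|\le1/6$, and the stated two-case bound on $|a_4|$, namely $|a_4|\le1/12$ for $0\le\alpha\le1/2$ and $|a_4|\le(1+2\alpha)/24$ for $1/2\le\alpha\le1$.

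The remaining work is to exhibit extremal functions showing sharpness. Here I would simply transport the extremal functions of Theorem \ref{L-100} through the correspondence $g\mapsto f$, where $f(z)=\int_0^z (g(t)/t)\,dt$. For the bounds $|a_2|\le1/2$, $|a_3|\le1/6$ and (in the range $1/2\le\alpha\le1$) $|a_4|\le(1+2\alpha)/24$, the extremal function in Theorem \ref{L-100} is $f_1(z)=z\big((1+\sqrt{\alpha}z)/(1-\sqrt{\alpha}z)\big)^{1/(2\sqrt{\alpha})}$ defined by \eqref{L-02}; the corresponding member of $\mathcal{BK}(\alpha)$ is
$$
g_1(z)=\int_0^z\frac{f_1(t)}{t}\,dt=\int_0^z\left(\frac{1+\sqrt{\alpha}t}{1-\sqrt{\alpha}t}\right)^{\frac{1}{2\sqrt{\alpha}}}dt .
$$
One checks, using $\exp\!\big(\int_0^u dt/(1-\alpha t^2)\big)=\big((1+\sqrt{\alpha}u)/(1-\sqrt{\alpha}u)\big)^{1/(2\sqrt{\alpha})}$, that this coincides with the function $g_1$ stated in the corollary, and its Taylor expansion begins $z+z^2/2!+z^3/3!+\tfrac{1+2\alpha}{4!}z^3+\cdots$, so the bounds are attained. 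For the bound $|a_4|\le1/12$ in the range $0\le\alpha\le1/2$, the extremal function in Theorem \ref{L-100} is $f_3$ as in \eqref{L-02} with $n=3$; integrating $f_3(t)/t$ gives exactly
$$
g_2(z)=\int_0^z\left(\frac{1+\sqrt{\alpha}t^3}{1-\sqrt{\alpha}t^3}\right)^{\frac{1}{6\sqrt{\alpha}}}dt=z+\frac{1}{12}z^4+\cdots,
$$
whose fourth coefficient is $1/12$, establishing sharpness there.

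There is really no serious obstacle; the one point requiring a little care is to confirm that the case boundary $\alpha=1/2$ is handled consistently (both formulas $1/12$ and $(1+2\alpha)/24$ agree at $\alpha=1/2$, giving $1/12$), and to note that the $\alpha\to0^+$ limiting cases of $g_1$ and $g_2$ are interpreted via the limit $\big((1+\sqrt{\alpha}w)/(1-\sqrt{\alpha}w)\big)^{1/(2\sqrt{\alpha})}\to e^{w}$, so the extremal functions remain well defined for $\alpha=0$. The estimates $|a_2|\le1/2$ and $|a_3|\le1/6$ recover the known results of Najmadi et al.\ \cite{2018-Najmadi}, and the $|a_4|$ estimate is the new content.
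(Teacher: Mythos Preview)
Your proposal is correct and follows exactly the approach the paper intends: the paper simply states that the corollary ``immediately follows from Theorem \ref{L-100}'', and your use of $g=zf'\in\mathcal{BS}(\alpha)$ with $b_n=na_n$, together with transporting the extremal functions $f_1,f_3$ of \eqref{L-02} via $f(z)=\int_0^z g(t)/t\,dt$, is precisely that derivation spelled out in detail. One trivial typo: in your displayed expansion of $g_1$ the term $\tfrac{1+2\alpha}{4!}z^3$ should read $\tfrac{1+2\alpha}{4!}z^4$.
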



In the next theorem, we obtain sharp estimates of the logarithmic coefficients for functions in the class $\mathcal{BS(\alpha)}$.
\begin{thm}
  Let $f\in \mathcal{BS(\alpha)}$ be of the form \eqref{L-05} where $0\le \alpha \le 1$ and the logarithmic coefficients $\gamma_{n}$ of $f(z)$ be given by \eqref{L-15}. For $0\le\alpha\le1$ and $n=1,2,3$, the logrithmic coefficient $\gamma_n$ satisfies the sharp inequality 
  \begin{align}\label{L-95}
  | \gamma_{n}| \leq \frac{1}{2n}.
  \end{align}
  For $0\le \alpha \le 3-2\sqrt{2},$ the inequality \eqref{L-95} holds for all $n\in\mathbb{N}$ and is sharp.
  Moreover, for $0\le\alpha\le1$ and every $n\in\mathbb{N}$, we have 
  \begin{align}\label{L-94}
  | \gamma_{n}| \leq \frac{1}{2}.
  \end{align}
 \end{thm}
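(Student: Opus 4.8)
The plan is to convert the membership $f\in\mathcal{BS}(\alpha)$ into an identity for the coefficients $2n\gamma_n$ and then establish the three assertions by successively softer tools. As in the proof of Theorem~\ref{L-100}, I would write $\frac{zf'(z)}{f(z)}-1=\frac{\omega(z)}{1-\alpha\omega^2(z)}$ with $\omega\in\mathcal{B}_0$ of the form \eqref{L-01}; differentiating \eqref{L-15} logarithmically gives $\frac{zf'(z)}{f(z)}-1=2\sum_{n\ge1}n\gamma_n z^n$, so that $2n\gamma_n$ is the $n$-th Taylor coefficient of $F_\alpha(\omega(z))=\sum_{j\ge0}\alpha^j\omega(z)^{2j+1}$. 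Reading off the first three coefficients yields $\gamma_1=\tfrac12 c_1$, $\gamma_2=\tfrac14 c_2$ and $6\gamma_3=c_3+\alpha c_1^3$. Since $|c_1|\le1$ and $|c_2|\le1$ (the latter, e.g., from Lemma~\ref{L-53} with $\mu=0$), the cases $n=1,2$ follow at once; for $n=3$ I would apply Lemma~\ref{L-52} with $\mu=0$ and $\nu=\alpha$, noting that $0\le\alpha\le1$ forces $(\mu,\nu)\in\Omega_1$, so $|c_3+\alpha c_1^3|\le1$ and $|\gamma_3|\le\tfrac16$. Sharpness of \eqref{L-95} for $n=1,2,3$ is witnessed by $f_n$ from \eqref{L-02}, for which $\frac{zf_n'(z)}{f_n(z)}-1=\frac{z^n}{1-\alpha z^{2n}}=z^n+\alpha z^{3n}+\cdots$, giving $2n\gamma_n=1$.

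To obtain \eqref{L-95} for all $n$ when $0\le\alpha\le3-2\sqrt2$, I would pass to subordination. Since $\omega$ is an analytic self-map of $\mathbb{D}$ fixing the origin, $F_\alpha(\omega(z))\prec F_\alpha(z)$, i.e. $2\sum_{n\ge1}n\gamma_n z^n\prec F_\alpha$. By the Piejko--Sok\'o{}\l{} result quoted in the introduction, $F_\alpha$ is convex univalent for $0\le\alpha\le3-2\sqrt2$, so Rogosinski's theorem on the coefficients of functions subordinate to a convex function gives $|2n\gamma_n|\le|F_\alpha'(0)|=1$, that is $|\gamma_n|\le\tfrac1{2n}$; equality again holds for $f_n$.

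For the universal bound \eqref{L-94}, valid on all of $0\le\alpha\le1$, I would work directly with $2\sum_{n\ge1}n\gamma_n z^n=\sum_{j\ge0}\alpha^j\omega(z)^{2j+1}$. Each $\omega^{2j+1}$ again belongs to $\mathcal{B}_0$, hence (being bounded by $1$ in $\mathbb{D}$) has all its Taylor coefficients of modulus at most $1$; moreover it vanishes to order at least $2j+1$ at the origin, so only the indices $j$ with $2j+1\le n$ affect the coefficient of $z^n$. Therefore $|2n\gamma_n|\le\sum_{0\le j\le(n-1)/2}\alpha^j\le\lfloor(n+1)/2\rfloor\le n$ (using $\alpha\le1$), whence $|\gamma_n|\le\tfrac12$.

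The only place I expect to need care is the $n=3$ estimate of the first assertion: one must recognise the functional as $c_3+\mu c_1c_2+\nu c_1^3$ with $\mu=0$, $\nu=\alpha$ and verify that $(0,\alpha)\in\Omega_1$ for every $\alpha\in[0,1]$. The second assertion is essentially a citation --- Rogosinski's coefficient theorem together with the Piejko--Sok\'o{}\l{} convexity statement --- and the third is an elementary coefficient estimate for functions in $\mathcal{B}_0$ combined with a finite geometric sum.
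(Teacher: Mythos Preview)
Your argument is correct. For the first two assertions --- the sharp bounds on $|\gamma_1|,|\gamma_2|,|\gamma_3|$ and the bound $|\gamma_n|\le\tfrac1{2n}$ in the convex range $0\le\alpha\le3-2\sqrt{2}$ --- you proceed exactly as the paper does: extract $2n\gamma_n$ as the $n$-th coefficient of $F_\alpha(\omega)$, apply $|c_1|,|c_2|\le1$ and Lemma~\ref{L-52} with $(\mu,\nu)=(0,\alpha)\in\Omega_1$, and then invoke Rogosinski's theorem together with the Piejko--Sok\'o{\l} convexity result, with sharpness from the functions $f_n$ of \eqref{L-02}.

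The one genuine difference is in the universal estimate \eqref{L-94}. The paper notes that $F_\alpha$ is starlike univalent for $0\le\alpha\le1$ and applies the starlike case of Rogosinski's theorem to the subordination $\sum 2n\gamma_n z^n\prec F_\alpha$, obtaining $2n|\gamma_n|\le n$ in one line. You instead expand $F_\alpha(\omega)=\sum_{j\ge0}\alpha^j\omega^{2j+1}$, bound each Taylor coefficient of $\omega^{2j+1}\in\mathcal{B}_0$ by $1$ via Cauchy's estimate, and observe that only finitely many $j$ contribute to the coefficient of $z^n$, giving $|2n\gamma_n|\le\sum_{0\le j\le(n-1)/2}\alpha^j\le\lfloor(n+1)/2\rfloor\le n$. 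Your route is more elementary --- it avoids citing the starlike case of Rogosinski altogether and yields the slightly sharper intermediate bound $|\gamma_n|\le\tfrac{1}{2n}\cdot\tfrac{1-\alpha^{\lfloor(n+1)/2\rfloor}}{1-\alpha}$ for $\alpha<1$ --- while the paper's route is shorter and keeps the treatment of \eqref{L-95} and \eqref{L-94} uniform.
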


 \begin{proof}
 If $f\in \mathcal{BS(\alpha)}$ is of the form \eqref{L-05}, then 
\begin{align*}
\frac{zf'(z)}{f(z)}-1\prec\frac{z}{1-\alpha z^2}=F_\alpha(z).
\end{align*}
Equivalently,
\begin{align}\label{L-97}
\sum_{n=1}^{\infty}2n\gamma_{n}z^n=z\frac{d}{dz}\left(\log\left(\frac{f(z)}{z}\right)\right) = \frac{zf'(z)}{f(z)}-1\prec
 F_\alpha(z).
\end{align}
Thus, there exist $\omega\in\mathcal{B}_0$ of the form \eqref{L-01} such that
$$
\sum_{n=1}^{\infty}2n\gamma_{n}z^n=\frac{\omega(z)}{1-\alpha (\omega(z))^2}=c_1z+c_2z^2+(\alpha c_1^3+c_3)z^3+\cdots.
$$
Equating coefficients of $z^n$ for $n=1,2,3$ we have
$$
\gamma_{1}=\frac{c_1}{2},\quad \gamma_{2}=\frac{c_2}{4},\quad \gamma_{3}=\frac{\alpha c_1^3+c_3}{6}.
$$
Therefore
$$
|\gamma_{1}|=\frac{|c_1|}{2}\le\frac{1}{2},
$$
and the inequality is sharp for the function $f_1\in\mathcal{BS}(\alpha)$ defined by \eqref{L-02}. Again, 
$$
 |\gamma_{2}|=\frac{|c_2|}{4}\le\frac{1}{4},
$$
and the inequality is sharp for the function $f_2\in\mathcal{BS}(\alpha)$ defined by \eqref{L-02}. Further,
$$
|\gamma_{3}|=\frac{|c_3+\alpha c_1^3|}{6}=\frac{|c_3+\mu  c_1c_2+\nu c_1^3|}{6},
$$
where $\mu=0$ and $\nu=\alpha$. Using the notation of Lemma \ref{L-52} we can see that $(\mu,\nu)\in\Omega_1$. Therefore Lemma \ref{L-52}, gives 
$$
|\gamma_{3}|\le\frac{1}{6},
$$
and the inequality is sharp for the function $f_3\in\mathcal{BS}(\alpha)$ defined by \eqref{L-02}.\\

Further, $F_\alpha$ is convex for $ 0\leq \alpha \leq 3-2\sqrt{2}$. Therefore from subordination relation \eqref{L-97} and by Rogosinski's theorem \cite{1943-Rogosinski}, we obtain, 
$$
2n|\gamma_n|\le 1 \implies|\gamma_{n}| \leq \frac{1}{2n}, \quad \mathrm{for ~all}~n\in \mathbb{N}.
$$

To show that the inequality \eqref{L-95} is sharp, let us consider the function $f_n$ defined by \eqref{L-02}. For the function $f_n(z)$, we have
 $$
 \log{\frac{f_n(z)}{z}}=\frac{1}{n}z^n+\cdots.
 $$  
Note that, $F_\alpha$ is starlike for $ 0\leq \alpha <1$. Then from subordination relation \eqref{L-97} and by Rogosinski's  theorem \cite{1943-Rogosinski}, we obtain, 
$$
2n|\gamma_n|\le n \implies|\gamma_{n}| \leq \frac{1}{2}, \quad \mathrm{for ~all}~n\in \mathbb{N}.
$$
\end{proof}

%

\section{Radius of convexity}\label{Radius of convexity}
In $2018$, Cho et al. \cite{2018-Cho} studied the radius of convexity for the class $\mathcal{BS(\alpha)}$ and conjectured that the radius of convexity of the class $\mathcal{BS(\alpha)}$ is $r=\min\{r_\alpha,r'\}$ where $r_\alpha$ is the root of $1-r-\alpha r^2=0$ and $r'$ is the smallest positive root of $1-3r+(1-6\alpha)r^2+5\alpha r^3 + 5 \alpha^2 r^4=0$. In the next theorem, we prove that the above conjecture is not true and we determine the exact radius of convexity for the class $\mathcal{BS(\alpha)}$.

\begin{thm}\label{L-54}
The radius of convexity for the class $\mathcal{BS(\alpha)}$ is $r_\alpha=\min\{r'_\alpha,r''_\alpha\}$
where $r'_\alpha$ and $r''_\alpha$ are the roots of 
$$
l_\alpha(r):=\alpha^2r^4+\alpha r^3+(1-2\alpha)r^2-3r+1=0 \quad and ~~ m_\alpha(r):=1-r-\alpha r^2=0
$$
respectively, lies in $(0,1)$.
\end{thm}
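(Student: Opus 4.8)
The class $\mathcal{BS}(\alpha)$ is defined by the subordination $zf'(z)/f(z)-1\prec F_\alpha(z)$ where $F_\alpha(z)=z/(1-\alpha z^2)$, so for $f\in\mathcal{BS}(\alpha)$ there is $\omega\in\mathcal{B}_0$ with
$$
\frac{zf'(z)}{f(z)}=1+\frac{\omega(z)}{1-\alpha\omega^2(z)}=:1+p(z),\qquad p(z)=\frac{\omega(z)}{1-\alpha\omega(z)^2}.
$$
The first move is to compute $1+zf''(z)/f'(z)$ in terms of $p$. Logarithmic differentiation of $zf'(z)/f(z)=1+p(z)$ gives
$$
1+\frac{zf''(z)}{f'(z)}=1+p(z)+\frac{zp'(z)}{1+p(z)}.
$$
So I must show $\operatorname{Re}\left(1+p(z)+\dfrac{zp'(z)}{1+p(z)}\right)>0$ for $|z|<r_\alpha$, for every admissible $\omega$. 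The standard device is to fix $|z|=r$, and bound $\operatorname{Re}p(z)$ from below and $\left|\dfrac{zp'(z)}{1+p(z)}\right|$ from above over the range of $p$; the worst case for subordination-to-a-fixed-function problems is always $\omega(z)=e^{i\theta}z$ scaled, i.e. the relevant quantities are governed by the image $F_\alpha(r\mathbb{D})$ and the Schwarz–Pick estimate.

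**Key steps.** (i) Using $|\omega(z)|\le |z|=r$ (and more precisely the Schwarz lemma), establish that $w:=\omega(z)$ ranges over $\{|w|\le r\}$, so $p(z)$ ranges over $F_\alpha(\{|w|\le r\})$. Since $F_\alpha$ is univalent (indeed starlike) on $\mathbb{D}$ for $0\le\alpha<1$ — and on $\{|w|<1\}$ for $\alpha=1$ away from slits — the image $F_\alpha(r\overline{\mathbb{D}})$ is a closed Jordan region; its leftmost point is $F_\alpha(-r)=-r/(1-\alpha r^2)$, giving
$$
\operatorname{Re}\,(1+p(z))\ge 1-\frac{r}{1-\alpha r^2}=\frac{1-r-\alpha r^2}{1-\alpha r^2}=\frac{m_\alpha(r)}{1-\alpha r^2}.
$$
This already forces $r\le r''_\alpha$ (the root of $m_\alpha$) and shows that the factor $1+p(z)$ stays in the right half-plane for $r<r''_\alpha$, so the term $zp'(z)/(1+p(z))$ is well-defined. (ii) Next bound the perturbation term. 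Write $p=F_\alpha\circ\omega$, so $zp'(z)=F_\alpha'(\omega(z))\,z\omega'(z)$; by the Schwarz–Pick inequality $|z\omega'(z)|\le r\cdot\dfrac{1-|\omega(z)|^2}{1-r^2}\le \dfrac{r(1-|\omega|^2)}{1-r^2}$, and combine with the explicit $F_\alpha'(w)=(1+\alpha w^2)/(1-\alpha w^2)^2$ and $1+p(z)=1+w/(1-\alpha w^2)=(1-\alpha w^2+w)/(1-\alpha w^2)$. After simplification one gets
$$
\left|\frac{zp'(z)}{1+p(z)}\right|\le \frac{r}{1-r^2}\cdot\frac{|1+\alpha w^2|\,(1-|w|^2)}{|1-\alpha w^2|\,|1-\alpha w^2+w|}.
$$
(iii) Maximize the right side and minimize $\operatorname{Re}(1+p)$ simultaneously over $|w|\le r$. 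The extremal configuration should be $w=-r$ real (this is where $\operatorname{Re}(1+p)$ is smallest and, one checks, where the quotient is largest): then $|w|^2=r^2$, $1-\alpha w^2=1-\alpha r^2$, $1+\alpha w^2=1+\alpha r^2$, $1-\alpha w^2+w=1-r-\alpha r^2=m_\alpha(r)$, so the bound becomes $\dfrac{r(1+\alpha r^2)(1-r^2)}{(1-r^2)(1-\alpha r^2)\,m_\alpha(r)}=\dfrac{r(1+\alpha r^2)}{(1-\alpha r^2)m_\alpha(r)}$. Requiring $\operatorname{Re}(1+p)-|zp'/(1+p)|\ge 0$ at this point gives
$$
\frac{m_\alpha(r)}{1-\alpha r^2}-\frac{r(1+\alpha r^2)}{(1-\alpha r^2)\,m_\alpha(r)}\ge 0
\iff m_\alpha(r)^2\ge r(1+\alpha r^2),
$$
and expanding $m_\alpha(r)^2-r(1+\alpha r^2)=(1-r-\alpha r^2)^2-r-\alpha r^3$ yields exactly $\alpha^2r^4+\alpha r^3+(1-2\alpha)r^2-3r+1=l_\alpha(r)$. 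Hence convexity holds as long as $l_\alpha(r)\ge 0$ and $m_\alpha(r)>0$, i.e. for $r<\min\{r'_\alpha,r''_\alpha\}$.

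**Sharpness and the main obstacle.** To finish I must (a) verify $l_\alpha$ and $m_\alpha$ each have a root in $(0,1)$ — evaluate $l_\alpha(0)=1>0$, $l_\alpha(1)=\alpha^2+\alpha-\alpha-1=\alpha^2-1\le 0$ (with care at $\alpha=1$), and $m_\alpha(0)=1>0$, $m_\alpha(1)=-\alpha<0$, giving the roots by the intermediate value theorem; and (b) exhibit an extremal function showing the radius cannot be enlarged, namely take $\omega(z)=z$ (or the appropriate rotation), i.e. $f$ with $zf'(z)/f(z)=1+z/(1-\alpha z^2)$, and check that $\operatorname{Re}(1+zf''(z)/f'(z))$ first vanishes at $z=-r_\alpha$ or $z=r_\alpha$ — precisely when $l_\alpha$ or $m_\alpha$ vanishes. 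The main obstacle is step (iii): rigorously confirming that $w=-r$ (real, on the boundary circle) is genuinely the extremal point for the combined functional $\operatorname{Re}(1+p(z))-\left|zp'(z)/(1+p(z))\right|$, rather than merely for each piece separately; this requires either a careful joint optimization over $|w|\le r$ and over the Schwarz–Pick freedom in $z\omega'(z)$, or a disk-version argument showing $1+zf''(z)/f'(z)$ maps $|z|<r_\alpha$ into a half-plane by tracking the boundary curve $|z|=r_\alpha$. This is also the step where the Cho et al. conjecture presumably went wrong — their polynomial $1-3r+(1-6\alpha)r^2+5\alpha r^3+5\alpha^2 r^4$ likely comes from an incorrect extremal choice — so I would double-check by comparing the two polynomials at a specific $\alpha$ (say $\alpha=1/2$) numerically to confirm $l_\alpha$ gives the correct, smaller radius.
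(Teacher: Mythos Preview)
Your plan coincides with the paper's route almost line by line: write
\[
1+\frac{zf''(z)}{f'(z)}=1+\frac{\omega(z)}{1-\alpha\omega^2(z)}+\frac{(1+\alpha\omega^2(z))\,z\omega'(z)}{(1+\omega(z)-\alpha\omega^2(z))(1-\alpha\omega^2(z))},
\]
bound the real part from below via $|\omega|\le|z|$ and Schwarz--Pick, and show the resulting lower bound at the extremal point equals $l_\alpha(r)/[(1-\alpha r^2)m_\alpha(r)]$. What you flag as the ``main obstacle'' in step~(iii) is exactly the gap: your proposal only \emph{evaluates} the combined bound at $w=-r$ but does not prove this is the minimum over all $|w|\le r$. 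The paper closes this gap concretely. With $s=|\omega(z)|$ and $r=|z|$, the lower bound is
\[
h(r,s)=1-\frac{s}{1-\alpha s^2}-\frac{r(1+\alpha s^2)(1-s^2)}{(1-r^2)(1-s-\alpha s^2)(1-\alpha s^2)},
\]
and one computes $\partial h/\partial s$ explicitly; its sign is governed by the degree-six polynomial
\[
\psi_\alpha(s)=1-(2-6\alpha)s+(1-4\alpha)s^2-4\alpha(1+\alpha)s^3+\alpha(4-\alpha)s^4+2\alpha^2(3-\alpha)s^5-\alpha^2 s^6.
\]
The paper shows $\psi_\alpha(s)\ge 0$ for $s\in[0,1)$ and $\alpha\in[0,1]$ by a tedious but elementary regrouping into manifestly nonnegative terms. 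This gives $\partial h/\partial s<0$, hence $h(r,s)\ge h(r,r)$, which is precisely your extremal value. So your plan is correct but incomplete until you carry out this monotonicity check; there is no shortcut by optimizing the two pieces separately.

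Two minor corrections: your value of $l_\alpha(1)$ is miscomputed---it is $\alpha^2-\alpha-1$, not $\alpha^2-1$ (still negative on $[0,1]$, so the intermediate-value argument survives); and the paper also establishes \emph{uniqueness} of the root $r'_\alpha$ in $(0,1)$ by showing that the discriminant of the cubic $l'_\alpha$ is negative, so $l'_\alpha$ has only one real root and Rolle's theorem rules out three roots of $l_\alpha$ in $(0,1)$. For sharpness the paper evaluates $1+zf_1''/f_1'$ at $z=-r$ for the extremal $f_1$ (your choice $\omega(z)=z$), exactly as you propose.
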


\begin{proof}
If $f\in \mathcal{BS(\alpha)}$, then from \eqref{L-07} we have
$$
\frac{zf'(z)}{f(z)}-1\prec\frac{z}{1-\alpha z^2}.
$$
Thus, there exist $\omega\in\mathcal{B}_0$ such that
\begin{align}\label{L-56}
\frac{zf'(z)}{f(z)}-1=\frac{\omega(z)}{1-\alpha \omega^2(z)}.
\end{align}
Note that any functions $\omega$ in $\mathcal{B}_0$ satisfies 
\begin{align}\label{L-60}
|\omega(z)|\le|z| \quad \text{and} ~~
|\omega'(z)|\le\frac{1-|\omega(z)|^2}{1-|z|^2}.
\end{align}
Taking logarithmic derivatives on both sides of \eqref{L-56} and by a simple calculation we get
\begin{align*}
1+\frac{zf''(z)}{f'(z)}=1+\frac{\omega(z)}{1-\alpha \omega^2(z)}+\left(\frac{1+\alpha\omega^2(z)}{(1+\omega(z)-\alpha \omega^2(z))(1-\alpha \omega^2(z))}\right)z\omega'(z).
\end{align*}
Therefore, using relation \eqref{L-60} and by a simple calculation
\begin{align}\label{L-65}
&\operatorname{Re}\left(1+\frac{zf''(z)}{f'(z)}\right)\\\quad \quad&\ge1-\frac{|\omega(z)|}{1-\alpha |\omega^2(z)|}-\frac{|z|(1+\alpha|\omega^2(z)|)(1-|\omega(z)|^2)}{(1-|z|^2)(1-|\omega(z)|-\alpha |\omega^2(z)|)(1-\alpha |\omega^2(z)|)}\nonumber\\&=1-\frac{s}{1-\alpha s^2}-\frac{r(1+\alpha s^2)(1-s^2)}{(1-r^2)(1-s-\alpha s^2)(1-\alpha s^2)}=:h(r,s)\nonumber,
\end{align}
where $0\le s=|\omega(z)|\le r=|z|<1$. Now, 
\begin{align}\label{L-70}
\frac{\partial}{\partial s}h(r,s)=-\frac{1
+\alpha s^2}{(1-\alpha s^2)^2}-\frac{r \psi_\alpha(s)}{(1-r^2)(1-s-\alpha s^2)^2(1-\alpha s^2)^2},
\end{align}
where 
$$
\psi_\alpha(s)=1-(2-6\alpha)s+(1-4\alpha)s^2-4\alpha(1+\alpha)s^3+\alpha(4-\alpha)s^4+2\alpha^2(3-\alpha)s^5-\alpha^2 s^6.
$$
A simple but tedious arrangement gives
\begin{align*}
\psi_\alpha(s)&=
1-2s-s^2-2\alpha^3 s^5+\alpha(6s-4s^2-4s^3+4s^4)-\alpha^2(4s^3+s^4-6s^5+s^6)\\&=(1-s)^2+2\alpha s^4(1-\alpha^2 s)+\alpha(6s-4s^2-4s^3+2s^4)-(1-s)^2\alpha^2s^4\\&\quad-4\alpha^2s^3+4\alpha^2 s^5\\&=(1-s)^2(1-\alpha^2 s^4)+2\alpha s^4(1-\alpha^2 s)+4\alpha s(1-s)+2\alpha s (1-s^2)\\&\quad-2\alpha s^3(1-s)-4\alpha^2s^3(1-s^2)\\&=(1-s)^2(1-\alpha^2 s^4)+2\alpha s^4(1-\alpha^2 s)-2\alpha^2s^4(1-s)+2\alpha s(1-s)(1-s^2)\\&\quad+2\alpha s (1-s^2)(1-\alpha s^2)+2\alpha s (1-s)(1-\alpha s^2)
\\&=(1-s)^2(1-\alpha^2 s^4)+2\alpha s(1-s)(2-s^2-\alpha s^2)+2\alpha s(1-s^2)(1-\alpha s^2)\\&\quad+2\alpha s^4(1-\alpha)(1+\alpha s)
\\&\ge 0~~\text{for}~~ \alpha\in [0,1]~~ \text{and} ~~s\in[0,1).
\end{align*}
 Hence, from \eqref{L-70} one can easily verify that
$$
\frac{\partial}{\partial s}h(r,s)<0,\quad 0\le s\le r<1.
$$
Thus, $h$ is a decreasing function in $s$ for $0\le s\le r<1$. Therefore, from \eqref{L-65} we have
\begin{align}\label{L-75}
\operatorname{Re}\left(1+\frac{zf''(z)}{f'(z)}\right)\ge  h(r,r)=1-\frac{r(2-r)}{(1-\alpha r^2)m_\alpha(r)}=\frac{l_\alpha(r)}{(1-\alpha r^2)m_\alpha(r)}\ge h(r,s),
\end{align}
where $l_\alpha(r)$ and $m_\alpha(r)$ are defined above. Now, we consider the following two cases.\\

\textbf{Case-1:} Let $0<\alpha\le1$. Then $l_\alpha(0)=1>0$ and $l_\alpha(1)=\alpha(\alpha-1)-1<0$. So $l_\alpha$ has either one or three real roots between $0$ and $1$. If possible, let $l_\alpha$ has three real roots in $(0,1)$. Then by Rolle's theorem $l'_\alpha$ (which is a polynomial of degree 3) must have two real roots between $0$ and $1$. But, by a simple calculation one can easily verify that the discriminant $\Delta(l'_\alpha)$ is 
$$
\Delta(l'_\alpha)=4\alpha^2[-23-87\alpha-416\alpha^2-256\alpha^2(1-\alpha)]<0.
$$
So $l'_\alpha$ has two complex roots. Thus, $l_\alpha$ has a unique real root, say $r'_\alpha$, in $(0,1)$. On the other hand, we have $m_\alpha(0)=1>0$, $m_\alpha(1)=-\alpha<0$ and also $m'_\alpha(r)=-1-2\alpha r<0$. Thus, $m_\alpha(r)$ has a unique real root $r''_\alpha=\left(-1+\sqrt{1+4\alpha}\right)/2\alpha$ in $(0,1)$. Let, $r_\alpha=\min\{r'_\alpha,r''_\alpha\}$.\\

\textbf{Case-2:} Let $\alpha=0$. Then $l_0(r)=0$ for $r=r'_\alpha=(3-\sqrt{5})/2$ and $m_0(r)=0$ implies $r=r''_\alpha=1$. Let, $r_\alpha=\min\{r'_\alpha,r''_\alpha\}=(3-\sqrt{5})/2$. \\

Combining \textbf{Case 1} and \textbf{Case 2}, the inequality \eqref{L-75} gives
$$
\operatorname{Re}\left(1+\frac{zf''(z)}{f'(z)}\right)>0 \quad \mathrm{for} \quad |z|<r_\alpha.
$$
Now, let us consider the function $f_1\in \mathcal{BS(\alpha)}$ defined by \eqref{L-02}.
Then,
$$
1+\frac{zf_1''(z)}{f_1'(z)}=1+\frac{z}{1-\alpha z^2}+\frac{z(1+\alpha z^2)}{(1+z-\alpha z^2)(1-\alpha z^2)},
$$ 
and so
$$
\operatorname{Re}\left(1+\frac{-rf_1''(-r)}{f_1'(-r)}\right)=\frac{l_\alpha(r)}{m_\alpha(r)(1-\alpha r^2)}
\begin{cases}
=0,\quad \mathrm{for} &r=r'_\alpha,\\
=\infty,\quad \mathrm{for} &r=r''_\alpha, \\
\le 0,\quad \mathrm{for}& r>r'_\alpha.
\end{cases}
$$
This shows that the radius of convexity of $\mathcal{BS}(\alpha)$ is $r_\alpha=\min\{r'_\alpha,r''_\alpha\}$.
\end{proof}

In the next theorem, we determine the radius of convexity for 
the class $\mathcal{BK}(\alpha)$.

\begin{thm}
The radius of convexity of the class $\mathcal{BK}(\alpha)$ is
$$
r_\alpha=\frac{\sqrt{1+4\alpha}-1}{2\alpha},\quad \alpha\in[0,1].
$$
\end{thm}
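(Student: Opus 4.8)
The plan is to reduce the problem to a one-line estimate on $F_\alpha$ itself, which is possible because membership in $\mathcal{BK}(\alpha)$ is a direct subordination condition on $zf''/f'$. If $f\in\mathcal{BK}(\alpha)$, there is $\omega\in\mathcal{B}_0$ with
\[
\frac{zf''(z)}{f'(z)}=F_\alpha(\omega(z))=\frac{\omega(z)}{1-\alpha\omega^2(z)},
\]
and by the Schwarz lemma $|\omega(z)|\le|z|$. Hence, writing $s=|\omega(z)|\le r=|z|$, the value $zf''(z)/f'(z)$ always lies in the image $F_\alpha(\{|\zeta|\le r\})$, so it suffices to locate the largest $r$ for which this image stays in the half-plane $\operatorname{Re}w>-1$; that $r$ is the radius of convexity.

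First I would record the elementary bound $\operatorname{Re}F_\alpha(\zeta)\ge-|F_\alpha(\zeta)|\ge-|\zeta|/(1-\alpha|\zeta|^2)$, valid for $|\zeta|^2<1/\alpha$, using $|1-\alpha\zeta^2|\ge1-\alpha|\zeta|^2$; equality is attained at $\zeta=-|\zeta|$ (real, negative), where $F_\alpha$ is itself real and negative. Since $\rho\mapsto\rho/(1-\alpha\rho^2)$ is strictly increasing on $[0,1/\sqrt{\alpha})$ (its derivative is $(1+\alpha\rho^2)/(1-\alpha\rho^2)^2>0$), the minimum of $\operatorname{Re}F_\alpha$ over $|\zeta|\le r$ equals $-r/(1-\alpha r^2)$. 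Consequently, for $|z|<r$,
\[
\operatorname{Re}\!\left(1+\frac{zf''(z)}{f'(z)}\right)\ge 1-\frac{r}{1-\alpha r^2},
\]
and the right-hand side is nonnegative exactly when $\alpha r^2+r\le1$, i.e. $r\le r_\alpha:=(\sqrt{1+4\alpha}-1)/(2\alpha)$ (with the convention $r_0=1$ obtained from the limit). A direct substitution verifies $\alpha r_\alpha^2+r_\alpha=1$, so $r/(1-\alpha r^2)<1$ strictly for $0\le r<r_\alpha$ (one also checks $r_\alpha<1/\sqrt{\alpha}$, since $\alpha r_\alpha^2=1-r_\alpha<1$); this gives $\operatorname{Re}(1+zf''(z)/f'(z))>0$ throughout $|z|<r_\alpha$.

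For sharpness I would use the function $g(z)=\int_0^z\exp\!\big(\int_0^u dt/(1-\alpha t^2)\big)\,du$ from the introduction (equivalently $g_1$ of Corollary~\ref{L-101}), which lies in $\mathcal{BK}(\alpha)$ because logarithmic differentiation gives $zg''(z)/g'(z)=F_\alpha(z)$ exactly, corresponding to the choice $\omega=\mathrm{id}$. Along the negative real axis, $1+(-r)g''(-r)/g'(-r)=1-r/(1-\alpha r^2)$, which equals $0$ at $r=r_\alpha$ and is strictly negative for $r_\alpha<r<\min\{1,1/\sqrt{\alpha}\}$; hence no disc larger than $|z|<r_\alpha$ can work, and the radius of convexity is precisely $r_\alpha$. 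The computations are entirely routine; the only point deserving care is checking that the crude estimate $\operatorname{Re}F_\alpha\ge-|F_\alpha|$ loses nothing at the critical radius — which it does not, precisely because the extremal configuration ($\omega=\mathrm{id}$, $z$ negative real) realizes equality — so, in contrast with Theorem~\ref{L-54}, no auxiliary polynomial-positivity argument is needed and I anticipate no genuine obstacle.
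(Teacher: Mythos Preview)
Your proof is correct and follows essentially the same route as the paper: both arguments use the Schwarz lemma together with the crude bound $\operatorname{Re}F_\alpha(\zeta)\ge -|\zeta|/(1-\alpha|\zeta|^2)$ to obtain $\operatorname{Re}(1+zf''/f')\ge 1-r/(1-\alpha r^2)$, and both verify sharpness with the function $g_1$ evaluated on the negative real axis. The only differences are expository --- you make the monotonicity of $\rho\mapsto\rho/(1-\alpha\rho^2)$ explicit, note the limiting interpretation $r_0=1$, and check $r_\alpha<1/\sqrt{\alpha}$ --- none of which amounts to a different method.
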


\begin{proof}
Let $f\in\mathcal{BK(\alpha)}$ be of the form \eqref{L-05}. Then
$$
\frac{zf''(z)}{f'(z)}\prec \frac{z}{1-\alpha z^2}.
$$
Thus, there exists a function $\omega\in\mathcal{B}_0$ such that
$$
\frac{zf''(z)}{f'(z)}=\frac{\omega(z)}{1-\alpha \omega^2(z)}.
$$
Therefore, for $0\le|z|=r<1$
\begin{align*}
\operatorname{Re}\left(1+\frac{zf''(z)}{f'(z)}\right)=&\operatorname{Re}\left(1+\frac{\omega(z)}{1-\alpha \omega^2(z)}\right)\\\ge& 1-\frac{|\omega(z)|}{|1-\alpha \omega^2(z)|}\\\ge&1-\frac{|z|}{1-\alpha |z|^2}\\=&\frac{1-r-\alpha r^2}{1-\alpha r^2}>0,~~\text{for}~~r<r_\alpha:=\frac{\sqrt{1+4\alpha}-1)}{2\alpha}.
\end{align*}

Now, let $g_1\in\mathcal{BK(\alpha)}$ be defined by
$$
g_1(z)=\int_0^z\exp\left(\int_0^u \frac{dt}{1-\alpha t^2} \right)du\quad z\in\mathbb{D}.
$$
Therefore,
\begin{align*}
\operatorname{Re}\left(1+\frac{zg_1''(z)}{g_1'(z)}\right)=\operatorname{Re}\left(1+\frac{z}{1-\alpha z^2}\right),
\end{align*}
and so
\begin{align*}
\operatorname{Re}\left(1+\frac{-rg_1''(-r)}{g_1'(-r)}\right)=\operatorname{Re}\left(\frac{1-r-\alpha r^2}{1-\alpha r^2}\right)=0,\quad \mathrm{for}~r=r_\alpha.
\end{align*}
This shows that the radius of convexity of $\mathcal{BK}(\alpha)$ is $r_\alpha$.
\end{proof}

\section{The pre-Schwarzian norm}\label{The pre-Schwarzian norm}
In the next theorem, we determine sharp estimate of the pre-Schwarzian norm for functions in the class $\mathcal{BK}(\alpha)$.

\begin{thm}\label{L-105}
If $f\in\mathcal{BK(\alpha)}$, then $\|P_f\|\le1$ and the estimate is sharp.
\end{thm}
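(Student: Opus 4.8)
The plan is to express the pre-Schwarzian derivative directly in terms of the Schwarz function of the defining subordination and then bound it using the Schwarz lemma. Since $f\in\mathcal{BK}(\alpha)$, there is $\omega\in\mathcal{B}_0$ with $zf''(z)/f'(z)=\omega(z)/(1-\alpha\omega(z)^2)$; dividing by $z$ and noting that $\omega(0)=0$ removes the apparent singularity at the origin, so
$$
P_f(z)=\frac{f''(z)}{f'(z)}=\frac{\omega(z)}{z\bigl(1-\alpha\omega(z)^2\bigr)},\qquad z\in\mathbb{D},
$$
is a well-defined analytic function on $\mathbb{D}$.

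For the upper bound, fix $z\in\mathbb{D}$ and put $r=|z|$ and $s=|\omega(z)|$, so that $s\le r<1$ by \eqref{L-60}. Since $t\mapsto t/(1-\alpha t^2)$ is increasing on $[0,1)$ and $|1-\alpha\omega(z)^2|\ge1-\alpha s^2>0$, one obtains
$$
|P_f(z)|\le\frac{1}{r}\cdot\frac{s}{1-\alpha s^2}\le\frac{1}{r}\cdot\frac{r}{1-\alpha r^2}=\frac{1}{1-\alpha r^2},
$$
hence $(1-|z|^2)|P_f(z)|\le(1-r^2)/(1-\alpha r^2)$. The right-hand side is a non-increasing function of $r$ on $[0,1)$ exactly because $\alpha\le1$ (this is precisely where the hypothesis enters, via $1-\alpha r^2\ge1-r^2$), and it equals $1$ at $r=0$; therefore it is $\le1$ for every $z\in\mathbb{D}$, and taking the supremum yields $\|P_f\|\le1$.

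For sharpness I would take the function $g_1\in\mathcal{BK}(\alpha)$ from Corollary~\ref{L-101},
$$
g_1(z)=\int_0^z\exp\!\left(\int_0^u\frac{dt}{1-\alpha t^2}\right)du,
$$
for which $zg_1''(z)/g_1'(z)=z/(1-\alpha z^2)=F_\alpha(z)$, so $g_1\in\mathcal{BK}(\alpha)$ (corresponding to $\omega(z)=z$). Since $g_1'(z)=\exp\bigl(\int_0^z dt/(1-\alpha t^2)\bigr)$, we get $P_{g_1}(z)=1/(1-\alpha z^2)$, and evaluating $(1-|z|^2)|P_{g_1}(z)|$ at $z=0$ gives $1$. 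Hence $\|P_{g_1}\|\ge1$, and combined with the bound just proved, $\|P_{g_1}\|=1$, so the estimate is sharp.

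There is no serious obstacle in this argument; the only point deserving attention is that the extremum is attained at the center $z=0$ rather than near the boundary — indeed for $g_1$ one has $(1-|z|^2)|P_{g_1}(z)|\to0$ as $|z|\to1$ when $\alpha<1$ — so one should locate the extremal at the origin and invoke the monotonicity of $(1-r^2)/(1-\alpha r^2)$ instead of hunting for a boundary maximizer.
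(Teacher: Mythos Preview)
Your proof is correct and follows essentially the same approach as the paper: express $P_f$ via the Schwarz function $\omega$, apply the Schwarz lemma $|\omega(z)|\le|z|$ together with $|1-\alpha\omega^2|\ge1-\alpha|\omega|^2$ to obtain $(1-|z|^2)|P_f(z)|\le(1-r^2)/(1-\alpha r^2)\le1$, and verify sharpness with the extremal function $g_1$ (the paper's $g$) at $z=0$. Your write-up is in fact a bit more explicit than the paper's about the monotonicity step and the location of the supremum at the origin, but the argument is the same.
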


\begin{proof}
If $f\in\mathcal{BK(\alpha)}$, then
$$
\frac{zf''(z)}{f'(z)}\prec \frac{z}{1-\alpha z^2}.
$$
Thus, there exist $\omega\in\mathcal{B}_0$ of the form \eqref{L-01} such that
$$
\frac{zf''(z)}{f'(z)}=\frac{\omega(z)}{1-\alpha \omega^2(z)}.
$$
By using Schwarz lemma, we get
$$
\|P_f\|=\sup_{z\in\mathbb{D}}(1-|z|^2)\left|\frac{f''(z)}{f'(z)}\right|=\sup_{z\in\mathbb{D}}(1-|z|^2)\frac{|\omega(z)|}{|z(1-\alpha \omega^2(z))|}\le\sup_{0\le|z|<1}\frac{1-|z|^2}{1-\alpha |z|^2}\le1.
$$
To show that the estimate is sharp, let $g\in\mathcal{BK(\alpha)}$ be defined by
$$
\frac{zg''(z)}{g'(z)}=\frac{z}{1-\alpha z^2}.
$$
Therefore,
$$
1\ge\|P_g\|=\sup_{z\in\mathbb{D}}(1-|z|^2)\left|\frac{g''(z)}{g'(z)}\right|=\sup_{0\le|z|<1}\frac{1-|z|^2}{|1-\alpha z^2|}\ge \sup_{0\le r<1}\frac{1-r^2}{1-\alpha r^2}=1 
$$
and so $\|P_g\|=1$.
\end{proof}

\begin{rem}
Since the pre-Schwarzian norm $\|P_f\|\le1$ for functions in $\mathcal{BK(\alpha)}$ 
it follows that functions in $\mathcal{BK(\alpha)}$ are also in $\mathcal{S}$ (see \cite{1972-Becker, 1984-Becker-Pommerenke}).  
\end{rem}

In the next example, we provide a function in the class $\mathcal{BS}(\alpha)$ with unbounded pre-Schwarzian norm.

\begin{example}\label{L-130}
Let us consider the function $f_1\in \mathcal{BS(\alpha)}$ of the form
$$
f_1(z)=z\left(\frac{1+\sqrt{\alpha}z}{1-\sqrt{\alpha}z}\right)^{\frac{1}{2\sqrt{\alpha}}}.
$$
Then,
\begin{align*}
\frac{f_1''(z)}{f_1'(z)}=&\frac{1}{1-\alpha z^2}+\frac{1+\alpha z^2}{(1+z-\alpha z^2)(1-\alpha z^2)}\\=&\frac{2+z}{(1+z-\alpha z^2)(1-\alpha z^2)}
\end{align*}
and so
\begin{align*}
\|P_{f_1}\|=&\sup_{z\in\mathbb{D}}(1-|z|^2)\left|\frac{f_1''(z)}{f_1'(z)}\right|\\
=&\sup_{z\in\mathbb{D}}(1-|z|^2)\frac{|2+z|}{|(1+z-\alpha z^2)(1-\alpha z^2)|}\ge \sup_{0\le r<1}\frac{(1-r^2)(2-r)}{|g_\alpha(r)|(1-\alpha r^2)},
\end{align*}
where $g_\alpha(r)=1-r-\alpha r^2$. Since $g_\alpha$ has a root $(\sqrt{1+4\alpha}-1)/2\alpha$ in $[0,1)$, it follows that
$$
\|P_{f_1}\|=\infty.
$$
\end{example}

\begin{rem}
The pre-Schwarzian norm for the function f in Example \ref{L-130} is infinite. This shows that functions in the class $\mathcal{BS(\alpha)}$ are not necessarily univalent (see \cite{1976-Yamashita}).
\end{rem}

In the next example, we provide a function in the class $\mathcal{BS}(\alpha)$ with bounded pre-Schwarzian norm.

\begin{example}
Let us consider the function  $f\in \mathcal{BS(\alpha)}$ defined by $f(z)=z.$ Then,
\begin{align*}
\|P_{f}\|=&\sup_{z\in\mathbb{D}}(1-|z|^2)\left|\frac{f''(z)}{f'(z)}\right|=0.
\end{align*}
\end{example}

\vspace{4mm}
\noindent\textbf{Data availability:}
Data sharing is not applicable to this article as no data sets were generated or analyzed during the current study.\vspace{4mm}
\\
\noindent\textbf{Authors Contributions:}
All authors contributed equally to the investigation of the problem and the order of the authors is given alphabetically according to their surname. All authors read and approved the final manuscript.\vspace{4mm}
\\
\noindent\textbf{Acknowledgement:}
The third author thanks the CSIR for the financial support through CSIR-SRF Fellowship ( File no.: 09/0973(13731)/2022-EMR-I ).


\end{document}